\RequirePackage{fix-cm}
\documentclass[smallextended]{svjour3}       % onecolumn (second format)
\smartqed  % flush right qed marks, e.g. at end of proof
\usepackage{algorithm}
\usepackage{algpseudocode}

\usepackage{amssymb}
\usepackage{amsmath}
\usepackage{epsf}
%\usepackage{graphicx}trait
% ____________________Own macros (begin)____________________

\newenvironment{tabcen}[1]{\begin{center}\begin{tabular}{#1}}{\end{tabular}\end{center}}

%-----------------------------------------------------------------------------------------

\title{Bent functions using Maiorana-McFarland secondary construction}
\titlerunning{Bent functions using Maiorana-McFarland secondary construction}  %

\author{Juan Carlos Ku-Cauich \and Javier Diaz-Vargas \and Sara Mandujano-Velazquez \and Victor Bautista-Ancona \thanks{The authors acknowledge the support of Mexican Conacyt}}
\authorrunning{Ku-Cauich, J. C., Diaz-Vargas J., Mandujano-Velazquez S., Bautista-Ancona V.}   % abbreviated author list (for running head)
%
%%%% modified list of authors for the TOC (add the affiliations)
%\tocauthor{J. C. Ku-Cauich, (CINVESTAV-IPN),
%G. Morales-Luna (CINVESTAV-IPN)}
%
\institute{Corresponding author Juan Carlos Ku-Cauich \at
Computer Science, Cinvestav, Mexico City, Mexico, %\\
\email{jcku@cs.cinvestav.mx}
\and
Javier Diaz-Vargas \at
Facultad de Matem\'aticas, UADY, M\'erida Yucat\'an, Mexico, %\\
\email{javier.diaz@correo.uady.mx}
\and
Sara Mandujano-Velazquez \at
ESFM, IPN, Mexico City, Mexico, %\\
\email{smandujanov2000@alumno.ipn.mx}
\and
Victor Bautista-Ancona \at
Facultad de Matem\'aticas, UADY, M\'erida Yucat\'an, Mexico, %\\
\email{vbautista@correo.uady.mx}
}

\journalname{}

\begin{document}

\date{Received: \today / Accepted: date}
% The correct dates will be entered by the editor

\maketitle
\begin{abstract}
Canteaut et al. demonstrated that bent Boolean functions are balanced over one of two complementary affine spaces. In this work, we provide an alternative proof of this property and leverage this result to establish the existence of $1$-plateaued functions whose restrictions to these affine spaces remain bent. By incorporating these results into the secondary Maiorana-McFarland construction, we obtain bent functions that are inherently balanced over a specific affine subspace (the underlying vector subspace). Furthermore, we analyse the balancedness of these new functions under linear perturbations. Finally, two algorithms are developed to simplify the research findings. It is worth noting that the sets of vectors with even and odd Hamming weights constitute a particular case of such complementary affine spaces.
\end{abstract}
\\
\\
2010 Mathematics Subject Classification. Primary: 06E30.
%\subjclass[2010]{Primary: 06E30 }
\\
{\bf Kewords}: Boolean functions, Bent functions, Maiorana-McFarland, Balancedness, Affine spaces

\section{Introduction}
\label{Introduction}

In symmetric-key cryptography, the security of block ciphers and stream ciphers relies on the non-linear components used to achieve the property of confusion, as originally defined by Shannon \cite{Shannon}. In modern block ciphers, such as the Advanced Encryption Standard (AES), this non-linearity is typically provided by Substitution-boxes (S-boxes), which are constructed from Boolean functions with specific combinatorial properties. Among these, bent functions, introduced by Rothaus in 1976 \cite{RothausBent}, represent a class of functions that achieve the maximum distance from the set of all affine functions. This property, known as maximum non-linearity, makes them ideal for resisting linear cryptanalysis \cite{MatsuiLin}, as they minimise any linear correlation between input and output bits.
Bent functions intersect multiple disciplines; they are fundamental to the construction of Kerdock codes in coding theory and are linked to Golay complementary sequences in communication systems \cite{CarletBook}. However, despite their high non-linearity, bent functions possess a significant drawback for direct cryptographic implementation: they are unbalanced

A Boolean function is balanced if it takes the values $0$ and $1$ with equal probability. In contrast, a bent function in $n$ variables always has a Hamming weight of $2^{n-1} \pm 2^{n/2-1}$ \cite{TokarevaBent}. This lack of balance introduces a statistical bias that can be exploited by attackers to distinguish a ciphertext from a random sequence or to perform statistical attacks.

Interest in bent functions, despite their lack of balance, is due to their utility in the construction of other cryptographically significant functions. For instance, highly non-linear balanced functions are often derived by modifying bent functions locally or by using them as seeds \cite{CarletBook}. Furthermore, bent functions are closely related to the class of plateaued functions, which can achieve balancedness while maintaining a non-linearity near the optimal bound for balanced Boolean functions \cite{Cusick2017}.

Finding new bent functions remains a mathematical and computational challenge due to the size of the search space, which grows at a rate of $2^{2^n}$. Furthermore, even for dimensions as small as $n=8$, the immense size of the Boolean function space ($2^{256}$) makes exhaustive search \cite{Preneel1994} computationally infeasible, as bent functions represent a vanishingly small fraction of the total space. Consequently, research has focused on two main directions: algebraic constructions \cite{DillonTesis,Mesnager2016} and heuristic methods \cite{Clark2004,Maitra2002}. Algebraic approaches include primary constructions, such as the Maiorana-McFarland and Partial Spreads (PS) families \cite{DillonTesis}, and secondary constructions that extend bent functions from lower to higher dimensions. On the other hand, metaheuristic techniques, including Genetic Algorithms (GA), Simulated Annealing (SA), and Tabu Search, have been successfully employed to explore the search space and discover functions with specific properties that are difficult to obtain through purely algebraic means \cite{Maitra2002}.

The structural analysis of Boolean functions often requires examining their behaviour on restricted domains, particularly on affine spaces and hyperplanes \cite{CarletArxiv2025}. In this theory, affine spaces serve as fundamental sub-structures for function decomposition. As established in the work of Canteaut et al. \cite{Cosets}, the restriction of a bent function to certain affine hyperplanes reveals hidden symmetries and balancedness properties. This structural perspective is intimately linked to the concept of normality; a Boolean function is said to be normal if it remains constant on an affine subspace of dimension $n/2$ \cite{NormalBoolean}. The interplay between bentness and normality highlights the importance of affine spaces in understanding how cryptographic properties are distributed across a function's domain.

Furthermore, affine spaces are essential in the characterisation of plateaued functions \cite{ZhengZhang,Li2025TIT}. These functions are particularly pivotal to \textit{Construction K}, where bent functions of higher dimensions are generated from plateaued components whose Walsh spectrum support is an affine subspace \cite{ConstructionK}. Similarly, in \cite{GeneralizedPary}, affine spaces are employed to demonstrate that the function $f: \mathbb{F}_p^n \rightarrow \mathbb{F}_p^n$, where $p$ is a prime number, is partially bent if $g$ is a bent function. In this context, the relation $f(\bar{x}, \bar{y}) = g(\bar{x})$ is applied in a similar manner to that used for deriving plateaued functions in \textit{Construction K}.

Building on previous work \cite{KuCauich2025}, which utilised a function $g_e$ for general $r$ to generate bent functions restricted to even and odd Hamming weight spaces, this paper extends those results to general affine spaces (hyperplanes in this case). In the current study, we focus on the case $r=1$ by defining $g_e(x, x_s) = g(x)$, where $g_e: \mathbb{F}_2^s \rightarrow \mathbb{F}_2$ with $x \in \mathbb{F}_2^{s-1}$ and $x_s \in \mathbb{F}_2$. We demonstrate that a bent function $g$ generates new bent functions through the restrictions of $g_e$ onto the affine hyperplanes $H_{\bar{\alpha}}$ and $H_{\bar{\alpha}}^c$ (denoted as $g_{e_0}$ and $g_{e_1}$, respectively), where the choice of $\bar{\alpha} = \bar{1}$ recovers the specific case of Hamming weight parity. It is important to note that the existence of bent functions on these hyperplanes is also established in \cite{Cosets}, given the prior existence of a Boolean function satisfying condition (i) or (ii) of Theorem V.2 in the aforementioned work. In our research, we observe that our function $g_e$ satisfies both conditions (i) and (ii); in particular, it is a $1$-plateaued function. However, we additionally provide an alternative proof based on the Walsh-Hadamard transform. Another significant result is the balancedness of a bent function when restricting its domain to one of the previously mentioned hyperplanes \cite[Theorem V.3]{Cosets}. We provide a different and more direct proof based on the maximum non-linearity of the bent function and its non-linearity with respect to the sum of a linear function.

The previously discussed framework is applied to a specific instance of the Maiorana-McFarland secondary construction, as presented in \cite{CarletBook,CarletMaioranaExtendido}. Given a function $\phi: \mathbb{F}_2^s \rightarrow \mathbb{F}_2$ such that the preimage $\phi^{-1}(a)$ is an affine space of dimension $s-1$, and a function $g_e: \mathbb{F}_2^s \rightarrow \mathbb{F}_2$ where the restriction ${g_e}_{|\phi^{-1}(a)}$ is bent, a new bent function $f: \mathbb{F}_2^{s+1} \rightarrow \mathbb{F}_2$ is defined by:
\begin{equation}
(x_0, \bar{x}) \mapsto x_0 \phi(\bar{x}) \oplus g_e(\bar{x}),
\end{equation}
where $x_0 \in \mathbb{F}_2$ and $\bar{x} \in \mathbb{F}_2^s$.

The Maiorana-McFarland class is widely utilised to construct Boolean functions with diverse cryptographic properties, including normal functions \cite{TokarevaBent,GeneralizedPary}, resilient functions, and those with correlation immunity \cite{CarletBook,CamionCrypto}. Within this class, bent function constructions are categorised into primary and secondary types \cite{CarletBook}. Unlike secondary constructions, primary ones do not require the prior existence of bent functions. The specific secondary construction employed in this work belongs to the ${MM}_r$ class, as discussed in \cite{CarletBook}. In this research, we establish that bent functions obtained via this secondary construction are precisely balanced when restricted to the affine hyperplane $H_{\bar{\alpha}}$. Furthermore, to provide a comprehensive analysis of the restricted balancedness of bent functions, we examine their behaviour under the addition of linear functions, specifically through a theorem and a corollary that lead to a specific algorithm. Additionally, as a primary result of this research, we develop an algorithm based on the Maiorana-McFarland construction that generates bent functions of any dimension.

To clarify the scope and novelty of this book chapter, we explicitly distinguish between background material, alternate proofs of known results, and original additions. The core foundational material concerning general bent functions is recalled in Section \ref{Background} from established literature \cite{TokarevaBent,MacWilliams}. Similarly, Theorems \ref{Nonli},\ref{Parse}, and \ref{Afinbent} adapt well-known combinatorial properties and character sums to affine domains; although these adaptations are analogous to standard results, we explicitly include their proofs for mathematical completeness. In Section \ref{Balancednessbent}, Theorem \ref{balancedness} establishes the balancedness of bent functions over complementary affine spaces, for which we provide a significantly different proof compared to the one given by Canteaut et al. \cite{Cosets}. On page 10, we introduce an explicit matrix representation for the elements of $\mathbb{F}_2^s$; while a related representation for the general case of a positive integer $r$ was presented in \cite{KuCauich2025}, our specific case $r=1$ removes the restriction to even and odd Hamming weight sets, operating instead over general affine subspaces. For Theorem \ref{BentBalancness}, we present a direct proof utilising the Walsh-Hadamard transform, although we note that this theorem could also be established by observing that the extended function $g_e$ is 1-plateaued and thus applying Theorem V.2 from \cite{Cosets}. Furthermore, within our proof of Theorem \ref{BentBalancness}, while Case 2 can be resolved via Theorem \ref{balancedness}, we additionally provide an alternative proof for this specific case. From this framework, we derive two original and specific consequences: Corollary \ref{BentWalsh1} and Corollary \ref{balanceextende}. Theorem \ref{balancedextended} introduces an original secondary construction using the Maiorana-McFarland framework which explicitly diverges from the design in \cite{KuCauich2025} because the underlying affine subspaces are no longer constrained to vectors of even or odd Hamming weights. Moreover, Theorem \ref{UsingMaiorana} provides an original result that extends the balancedness analysis to linear perturbations by adding affine functions to the newly generated bent functions. Finally, we contribute two original algorithms where the first constructs new families of bent functions, while the second determines and ensures restricted balancedness under specific conditions.

We proceed as follows. In Section \ref{Background}, we review the necessary preliminaries and fundamental definitions of Boolean functions. In Section \ref{Balancednessbent}, we provide an alternative proof of the property that bent functions are balanced when restricted to specific affine spaces; this analysis illustrates the distribution of pre-images for both $0$ and $1$ across the entire domain. Section \ref{NewFamily} introduces two complementary affine subspaces of $\mathbb{F}_2^s$ and demonstrates the existence of specific bent functions defined over them, derived from an initial bent function on $\mathbb{F}_2^{s-1}$. This section also analyses the addition of linear functions to the resulting bent functions. In Section \ref{Secondary Constrution}, the previously defined functions and affine spaces are integrated into the secondary Maiorana-McFarland construction to generate bent functions that are balanced exclusively on one of the affine spaces. Similar to the preceding section, we evaluate the restricted balancedness of these new functions under linear perturbations. Finally, we provide two algorithms: one to construct new bent functions of any dimension by iteratively applying Theorem \ref{balancedextended}, and another to construct bent functions under linear sums while predicting the specific affine space where balancedness occurs.

\section{Background}
\label{Background}

This section recalls the fundamental definitions and results concerning Boolean functions, with a particular focus on bent functions. For a more comprehensive treatment of these topics, the reader is referred to \cite{TokarevaBent}, \cite{MacWilliams}, and \cite{BFunctionsRepresentationCarlet}.

\begin{definition}
A function $f: \mathbb{F}_2^n \rightarrow \mathbb{F}_2$ is called a \textbf{Boolean function}. We denote the set of all Boolean functions with domain $\mathbb{F}_2^n$ by $\mathcal{B}_n$.
\end{definition} 

Any Boolean function $f \in \mathcal{B}_n$ can be uniquely represented by its \textbf{algebraic normal form} (ANF):
\[ f(\bar{x}) = \bigoplus_{\bar{u} \in \mathbb{F}_2^n} a_{\bar{u}} \bar{x}^{\bar{u}}, \]
where $a_{\bar{u}} \in \mathbb{F}_2$ and the term $\bar{x}^{\bar{u}}$ is defined as the product $x_1^{u_1} \cdots x_n^{u_n}$ for $\bar{x} = (x_1, \dots, x_n)$ and $\bar{u} = (u_1, \dots, u_n)$.

\begin{example}
The Boolean function $f \in \mathcal{B}_3$ defined by $f(x_1, x_2, x_3) = 1 \oplus x_1 x_2 \oplus x_1 x_2 x_3$ is presented in its ANF.
\end{example}

The coefficients $a_{\bar{u}}$ of the ANF can be determined from the values of the function as follows:

\begin{theorem}
Let $f \in \mathcal{B}_n$. The coefficients $a_{\bar{u}}$ of its algebraic normal form are given by:
\[ a_{\bar{u}} = \bigoplus_{\bar{x} \leq \bar{u}} f(\bar{x}), \]
where $\bar{x} \leq \bar{u}$ denotes the partial order such that $x_i \leq u_i$ for all $i \in \{1, \dots, n\}$.
\end{theorem}

\begin{definition}
The set of all \textbf{affine} Boolean functions from $\mathbb{F}_2^n$ to $\mathbb{F}_2$, denoted by $\mathcal{A}_n$, is defined as:
\[ \mathcal{A}_n := \{ l_{\bar{a}, a_0} \mid \bar{a} \in \mathbb{F}_2^n, a_0 \in \mathbb{F}_2 \}, \]
where $l_{\bar{a}, a_0}(\bar{x}) = \bar{a} \cdot \bar{x} \oplus a_0$ and $\cdot$ denotes the standard inner product over $\mathbb{F}_2^n$.
\end{definition}

\begin{example}
Consider the case $n=2$ with the domain $\mathbb{F}_2^2$. The set of linear functions has cardinality $2^2 = 4$ by setting $a_0 = 0$:
\[ l_{(0,0)}(\bar{x}) = 0, \quad l_{(1,0)}(\bar{x}) = x_1, \quad l_{(0,1)}(\bar{x}) = x_2, \quad l_{(1,1)}(\bar{x}) = x_1 \oplus x_2. \]
By choosing $a_0 = 1$, we obtain their corresponding affine shifts:
\[ l_{(0,0), 1}(\bar{x}) = 1, \quad l_{(1,0), 1}(\bar{x}) = x_1 \oplus 1, \quad l_{(0,1), 1}(\bar{x}) = x_2 \oplus 1, \quad l_{(1,1), 1}(\bar{x}) = x_1 \oplus x_2 \oplus 1. \]
Thus, the total set of affine functions $\mathcal{A}_2$ contains exactly $2^{2+1} = 8$ elements.
\end{example}

In the specific case where $a_0 = 0$, the function $l_{\bar{a}, 0}$ is referred to as a \textbf{linear function} and is denoted simply by $l_{\bar{a}}$. It is worth noting that $|\mathcal{A}_n| = 2^{n+1}$, whereas the number of linear functions is $2^n$. To put these quantities into perspective, the total number of general Boolean functions from $\mathbb{F}_2^n$ to $\mathbb{F}_2$ is $2^{2^n}$. Therefore, as $n$ grows, the set of affine functions is significantly smaller.

\begin{definition}
The \textbf{Hamming weight} of a vector $\bar{x} \in \mathbb{F}_2^n$, denoted by $w_H(\bar{x})$, is the number of its non-zero coordinates. Correspondingly, the \textbf{Hamming distance} between two Boolean functions $f, g \in \mathcal{B}_n$, denoted by $d_H(f, g)$, is defined as the number of positions where their values differ:
\[ d_H(f, g) := |\{ \bar{x} \in \mathbb{F}_2^n \mid f(\bar{x}) \neq g(\bar{x}) \}| = w_H(f \oplus g). \]
\end{definition}

\begin{definition}
The \textbf{non-linearity} of a Boolean function $f \in \mathcal{B}_n$, denoted by $\mathcal{N}(f)$, is defined as the minimum Hamming distance between $f$ and the set of all affine functions:
\[ \mathcal{N}(f) := \min_{g \in \mathcal{A}_n} d_H(f, g). \]
Boolean functions that achieve the maximum possible non-linearity for a given $n$ are termed \textbf{bent functions}.
\end{definition}

To further characterise non-linearity, we utilise the following transform:

\begin{definition}
The \textbf{Walsh-Hadamard Transform} of a Boolean function $f \in \mathcal{B}_n$ at $\bar{a} \in \mathbb{F}_2^n$ is defined as:
\[ \widehat{\mathcal{W}}_f(\bar{a}) := \sum_{\bar{x} \in \mathbb{F}_2^n} (-1)^{f(\bar{x}) \oplus \bar{a} \cdot \bar{x}}. \]
\end{definition}

The non-linearity of a function can be explicitly expressed in terms of its Walsh-Hadamard coefficients.

\begin{theorem}
The non-linearity of a Boolean function $f \in \mathcal{B}_n$ is given by:
\[ \mathcal{N}(f) = 2^{n-1} - \frac{1}{2} \max_{\bar{a} \in \mathbb{F}_2^n} |\widehat{\mathcal{W}}_f(\bar{a})|. \]
\end{theorem}

\begin{theorem}
A Boolean function $f \in \mathcal{B}_n$ is bent if and only if its Walsh-Hadamard coefficients satisfy $\widehat{\mathcal{W}}_f(\bar{a}) = \pm 2^{n/2}$ for every $\bar{a} \in \mathbb{F}_2^n$.
\end{theorem}

Consequently, bent functions exist only for even $n$ and possess a non-linearity of $2^{n-1} - 2^{n/2-1}$.

\section{The balancedness of bent functions in two affine subspaces of their domain}
\label{Balancednessbent}

In this section, we investigate the distribution of the preimages of a bent function over two affine subspaces of $\mathbb{F}_2^n$: the hyperplane $H_{\bar{\alpha}} := \{ \bar{x} \in \mathbb{F}_2^n \mid l_{\bar{\alpha}}(\bar{x}) = 0 \}$ and its complement $H_{\bar{\alpha}}^c := \{ \bar{x} \in \mathbb{F}_2^n \mid l_{\bar{\alpha}}(\bar{x}) = 1 \}$, where $\bar{\alpha} \neq \bar{0}$. Note that $H_{\bar{\alpha}}$ is an $(n-1)$-dimensional vector space, a property derived from the balancedness and linearity of the function $l_{\bar{\alpha}}$. Specifically, if $\bar{\alpha} = \bar{1} = (1, \dots, 1)$, these affine spaces correspond to the set of vectors with even Hamming weight ($H_{\bar{1}}$) and the set of vectors with odd Hamming weight ($H_{\bar{1}}^c$).

Theorem \ref{balancedness} below coincides with Theorem V.3 established by Canteaut et al. \cite{Cosets}. However, the underlying methodology differs significantly. While the proof in \cite{Cosets} relies on an analytical approach supported by their Lemma B.1 (which employs Jacobi's theta functions to constrain the possible integer solutions of the sum of squares $X^2 + Y^2 = 2^{n+1}$) the proof presented here adopts a combinatorial-structural perspective. 

Our approach directly exploits the definition of Hamming weight and the geometric partition of the space $\mathbb{F}_2^n$ into affine hyperplanes. Furthermore, a fundamental insight in our demonstration is the consideration of the bent function $f \oplus l_{\bar{\alpha}}$ derived from the original bent function $f$.

\begin{theorem}\label{balancedness} 
Let $H_{\bar{\alpha}} := \{ \bar{x} \in \mathbb{F}_2^n \mid l_{\bar{\alpha}}(\bar{x}) = 0 \}$ and $H_{\bar{\alpha}}^c := \{ \bar{x} \in \mathbb{F}_2^n \mid l_{\bar{\alpha}}(\bar{x}) = 1 \}$. Every bent function $f: \mathbb{F}_2^n \rightarrow \mathbb{F}_2$, where $n \geq 2$ and $n$ is even, satisfies that either $f_{|H_{\bar{\alpha}}}$ or $f_{|H_{\bar{\alpha}}^c}$ is balanced.
\end{theorem}

\begin{proof}

Let $f \in \mathcal{B}_n$ be a bent function. From its non-linearity characterization and Walsh-Hadamard transform, it follows that for any $\bar{a} \in \mathbb{F}_2^n$:
\[ w_H(f \oplus l_{\bar{a}}) = 2^{n-1} \pm 2^{\frac{n-2}{2}}. \]
Since $l_{\bar{\alpha}}(H_{\bar{\alpha}}) = \{ 0 \}$ and $l_{\bar{\alpha}}(H_{\bar{\alpha}}^c) = \{ 1 \}$, the weight distributions across the partition $\mathbb{F}_2^n = H_{\bar{\alpha}} \cup H_{\bar{\alpha}}^c$ satisfy:
\begin{align*}
w_H((f \oplus l_{\bar{\alpha}})_{|H_{\bar{\alpha}}}) &= w_H(f_{|H_{\bar{\alpha}}}), \\
w_H((f \oplus l_{\bar{\alpha}})_{|H_{\bar{\alpha}}^c}) &= |H_{\bar{\alpha}}^c| - w_H(f_{|H_{\bar{\alpha}}^c}) = 2^{n-1} - w_H(f_{|H_{\bar{\alpha}}^c}).
\end{align*}
Setting $w_H(f_{|H_{\bar{\alpha}}}) = c$ with $0 \leq c \leq 2^{n-1}$, the total weight $w_H(f) = w_H(f_{|H_{\bar{\alpha}}}) + w_H(f_{|H_{\bar{\alpha}}^c}) = 2^{n-1} \pm 2^{\frac{n-2}{2}}$ implies that $w_H(f_{|H_{\bar{\alpha}}^c})$ must satisfy one of the following two conditions:
\[ w_H(f_{|H_{\bar{\alpha}}^c}) = \left(2^{n-1} \pm 2^{\frac{n-2}{2}}\right) - c. \]

\textbf{Case 1.} Let $w_H(f_{|H_{\bar{\alpha}}^c}) = (2^{n-1} - 2^{\frac{n-2}{2}}) - c$. Then, we have $w_H((f \oplus l_{\bar{\alpha}})_{|H_{\bar{\alpha}}}) = c$ and $w_H((f \oplus l_{\bar{\alpha}})_{|H_{\bar{\alpha}}^c}) = 2^{n-1} - (2^{n-1} - 2^{\frac{n-2}{2}} - c) = 2^{\frac{n-2}{2}} + c$. By considering the sum of all weights, we obtain $w_H(f \oplus l_{\bar{\alpha}}) = c + 2^{\frac{n-2}{2}} + c = 2^{\frac{n-2}{2}} + 2c$.

\begin{itemize}
\item \mbox{\bf{Subcase 1a.}} If $w_H(f \oplus l_{\bar{\alpha}}) = 2^{n-1} - 2^{\frac{n-2}{2}}$, we find that $c = 2^{n-2} - 2^{\frac{n-2}{2}}$. This results in $w_H(f_{|H_{\bar{\alpha}}}) = 2^{n-2} - 2^{\frac{n-2}{2}}$ and $w_H(f_{|H_{\bar{\alpha}}^c}) = 2^{n-2}$. Therefore, $f_{|H_{\bar{\alpha}}^c}$ is balanced.
    
    \item \mbox{{\bf Subcase 1b.}} If $w_H(f \oplus l_{\bar{\alpha}}) = 2^{n-1} + 2^{\frac{n-2}{2}}$, we find that $c = 2^{n-2}$. This results in $w_H(f_{|H_{\bar{\alpha}}}) = 2^{n-2}$ and $w_H(f_{|H_{\bar{\alpha}}^c}) = 2^{n-2} - 2^{\frac{n-2}{2}}$. Therefore, $f_{|H_{\bar{\alpha}}}$ is balanced.
\end{itemize}

\textbf{Case 2.} Let $w_H(f_{|H_{\bar{\alpha}}^c}) = (2^{n-1} + 2^{\frac{n-2}{2}}) - c$. Then, $w_H((f \oplus l_{\bar{\alpha}})_{|H_{\bar{\alpha}}}) = c$ and $w_H((f \oplus l_{\bar{\alpha}})_{|H_{\bar{\alpha}}^c}) = 2^{n-1} - (2^{n-1} + 2^{\frac{n-2}{2}} - c) = -2^{\frac{n-2}{2}} + c$. Considering the sum of all weights, we have $w_H(f \oplus l_{\bar{\alpha}}) = c - 2^{\frac{n-2}{2}} + c = -2^{\frac{n-2}{2}} + 2c$.

\begin{itemize}
    \item  \mbox{\bf Subcase 2a.} If $w_H(f \oplus l_{\bar{\alpha}}) = 2^{n-1} - 2^{\frac{n-2}{2}}$, we find that $c = 2^{n-2}$. This results in $w_H(f_{|H_{\bar{\alpha}}}) = 2^{n-2}$ and $w_H(f_{|H_{\bar{\alpha}}^c}) = 2^{n-2} + 2^{\frac{n-2}{2}}$. Therefore, $f_{|H_{\bar{\alpha}}}$ is balanced.
    
    \item \mbox{\bf Subcase 2b.} If $w_H(f \oplus l_{\bar{\alpha}}) = 2^{n-1} + 2^{\frac{n-2}{2}}$, we find that $c = 2^{n-2} + 2^{\frac{n-2}{2}}$. This results in $w_H(f_{|H_{\bar{\alpha}}}) = 2^{n-2} + 2^{\frac{n-2}{2}}$ and $w_H(f_{|H_{\bar{\alpha}}^c}) = 2^{n-2}$. Therefore, $f_{|H_{\bar{\alpha}}^c}$ is balanced.
\end{itemize}

Since all possible cases lead to balancedness when restricting the domain of the bent function to either even or odd Hamming weight vectors, the proof is complete.
\end{proof}

It is important to note that any Boolean function satisfying the Hamming weight conditions described in the aforementioned subcases is necessarily a bent function. Furthermore, the proof provided not only establishes the balancedness of the preimages over one of the affine subspaces but also determines the exact number of preimages across the entire domain of the function, depending on the specific subcase considered. Finally, this approach highlights the direct structural relationship between the bent function $f$ and its affine shift $f \oplus l_{\bar{\alpha}}$.

\medskip
Based on the maximum non-linearity of a bent function and Theorem \ref{balancedness}, the cardinality distribution of the preimages of a bent function is as follows:

\begin{remark}\label{DistriWeight}
Let $f \in \mathcal{B}_n$ be a bent function. The cardinality of its preimages over the partition $\mathbb{F}_2^n = H_{\bar{\alpha}} \cup H_{\bar{\alpha}}^c$ is symmetrically determined by the sign of $\widehat{\mathcal{W}}_{f}(\bar{0})$ and the specific subspace onto which $f$ is balanced:

\small
\begin{center}
\begin{tabular}{|c|c|c|c|c|c|}
\hline
\textbf{Balanced Subspace} & $\widehat{\mathcal{W}}_{f}(\bar{0})$ & $\#(f)_{|H_{\bar{\alpha}}}^{-1}(0)$ & $\#(f)_{|H_{\bar{\alpha}}}^{-1}(1)$ & $\#(f)_{|H_{\bar{\alpha}}^c}^{-1}(0)$ & $\#(f)_{|H_{\bar{\alpha}}^c}^{-1}(1)$ \\ \hline
\hline
\rule{0pt}{3ex} & $2^{n/2}$  & $2^{n-2}$ & $2^{n-2}$ & $2^{n-2} + 2^{\frac{n-2}{2}}$ & $2^{n-2} - 2^{\frac{n-2}{2}}$ \\ [1ex]
\cline{2-6}
\raisebox{1.5ex}[0pt]{$f_{|H_{\bar{\alpha}}}$ is balanced} & $-2^{n/2}$ & $2^{n-2}$ & $2^{n-2}$ & $2^{n-2} - 2^{\frac{n-2}{2}}$ & $2^{n-2} + 2^{\frac{n-2}{2}}$ \\ \hline
\hline
\rule{0pt}{3ex} & $2^{n/2}$  & $2^{n-2} + 2^{\frac{n-2}{2}}$ & $2^{n-2} - 2^{\frac{n-2}{2}}$ & $2^{n-2}$ & $2^{n-2}$ \\ [1ex]
\cline{2-6}
\raisebox{1.5ex}[0pt]{$f_{|H_{\bar{\alpha}}^c}$ is balanced} & $-2^{n/2}$ & $2^{n-2} - 2^{\frac{n-2}{2}}$ & $2^{n-2} + 2^{\frac{n-2}{2}}$ & $2^{n-2}$ & $2^{n-2}$ \\ \hline
\end{tabular}
\end{center}
\normalsize

It can be observed that when $f_{|H_{\bar{\alpha}}}$ is balanced and $\widehat{\mathcal{W}}_{f}(\bar{0}) = 2^{n/2}$, the number of preimages of $f$ mapped to one over $H_{\bar{\alpha}}^c$ decreases to $\#(f)_{|H_{\bar{\alpha}}^c}^{-1}(1) = 2^{n-2} - 2^{\frac{n-2}{2}}$. Under these conditions, the complement relation $\#(f \oplus l_{\bar{\alpha}})_{|H_{\bar{\alpha}}^c}^{-1}(1) = 2^{n-1} - \#(f)_{|H_{\bar{\alpha}}^c}^{-1}(1)$ established in the previous proof implies that the number of preimages mapped to one for $f \oplus l_{\bar{\alpha}}$ over $H_{\bar{\alpha}}^c$ increases to $2^{n-2} + 2^{\frac{n-2}{2}}$, as verified in Subcase 1b.
\end{remark}

\section{Construction of bent functions over two affine spaces}
\label{NewFamily}

We extend the definition of bent functions with domain ${\mathbb F}_2^n$ to include bent functions defined on an affine subspace, as utilised in \cite{Cosets} and \cite{CarletMaioranaExtendido}. Subsequently, we represent the vector space ${\mathbb F}_2^s$ in terms of two complementary affine spaces. We then identify bent functions over these spaces and analyse the addition of linear functions to the resulting constructions.

\medskip
The following three theorems extend the classical properties of Boolean functions to the context of affine domains. Although their proofs follow standard combinatorial and algebraic arguments based on character sums analogous to those over the full space ${\mathbb F}_2^n$ (see, e.g., \cite{CarletBook,MacWilliams}), we briefly outline these proofs in order to establish the explicit dimensional dependencies over the restricted subspace $\mathcal{C}$ (cf. \cite{carlet1994two,CharpinRestriction}).

\begin{theorem}\label{Nonli} Let a function $f:{\mathcal C}\subseteq {\mathbb F}_2^n \rightarrow {\mathbb F}_2$. Then $${\mathcal N}_{\mathcal C}(f) = 2^{m-1}- \frac{1}{2}\max_{\bar{a}\in {\mathbb F}_2^n} |\widehat{{\mathcal W}}_{f}(\bar{a})|,$$ 

where $\widehat{{\mathcal W}}_f(\bar{a}):= \sum \limits_{\bar{x} \in {\mathcal C}} (-1)^{f(\bar{x})\oplus \bar{a}\cdot \bar{x}}.$
\end{theorem}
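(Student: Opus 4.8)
The plan is to mimic the classical proof of the Walsh characterisation of nonlinearity, the only new ingredient being that everything is counted over the affine subspace $\mathcal{C}$ rather than over all of $\mathbb{F}_2^n$. First I would fix an affine function $g\in\mathcal{A}_n$, written $g(x)=a\cdot x\oplus a_0$ with $a\in\mathbb{F}_2^n$ and $a_0\in\mathbb{F}_2$, and make explicit the convention that $d_H(f,g)$ is the number of $x\in\mathcal{C}$ with $f(x)\neq g(x)$ (this is forced, since $f$ is only defined on $\mathcal{C}$). Splitting the sum
$$\sum_{x\in\mathcal{C}}(-1)^{f(x)\oplus a\cdot x\oplus a_0}$$
into its $+1$ part (agreement) and its $-1$ part (disagreement), and using $|\mathcal{C}|=2^{m}$, one gets the identity
$$\sum_{x\in\mathcal{C}}(-1)^{f(x)\oplus a\cdot x\oplus a_0}=2^{m}-2\,d_H(f,g),$$
hence $d_H(f,g)=2^{m-1}-\tfrac12(-1)^{a_0}\,\widehat{{\mathcal W}}_f(a)$.

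Next I would take the minimum over all $g\in\mathcal{A}_n$, i.e. over all pairs $(a,a_0)$. For a fixed $a\in\mathbb{F}_2^n$, the two admissible values $a_0\in\{0,1\}$ give $2^{m-1}-\tfrac12\widehat{{\mathcal W}}_f(a)$ and $2^{m-1}+\tfrac12\widehat{{\mathcal W}}_f(a)$, whose minimum is $2^{m-1}-\tfrac12|\widehat{{\mathcal W}}_f(a)|$. Taking the further minimum over $a$ replaces the $-\tfrac12|\widehat{{\mathcal W}}_f(a)|$ term by $-\tfrac12\max_{a\in\mathbb{F}_2^n}|\widehat{{\mathcal W}}_f(a)|$, which is exactly the asserted formula $Nl(f)=2^{m-1}-\tfrac12\max_{a\in\mathbb{F}_2^n}|\widehat{{\mathcal W}}_f(a)|$.

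There is essentially no obstacle in this argument; it is a bookkeeping exercise once the restricted Hamming distance is correctly set up. The one point worth a sentence of care is that distinct $a\in\mathbb{F}_2^n$ may restrict to the same affine function on $\mathcal{C}$ (the linear forms vanishing on the direction space of $\mathcal{C}$ all restrict to constants), so the map $a\mapsto g|_{\mathcal C}$ is not injective; but this is harmless, since it only means the maximum of $|\widehat{{\mathcal W}}_f(a)|$ is attained by several $a$, and every affine function on $\mathcal{C}$ is still reached. The previously recalled ANF/Walsh facts are not even needed here — only the elementary sign identity above.
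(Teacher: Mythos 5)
Your argument is correct and is exactly the adaptation of the classical Walsh--Hadamard characterisation that the paper itself invokes (it omits the proof, stating only that it is obtained ``similarly to the traditional bent function proofs''): the sign identity $\sum_{x\in\mathcal{C}}(-1)^{f(x)\oplus a\cdot x\oplus a_0}=2^{m}-2\,d_H(f,g)$ over the $2^m$ points of $\mathcal{C}$, followed by minimisation over $a_0$ and $a$, is precisely the intended route. Your remark on the convention for $d_H$ restricted to $\mathcal{C}$ and on the non-injectivity of $a\mapsto g|_{\mathcal C}$ is a welcome clarification rather than a deviation.
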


\begin{proof} We can see that, for all $\bar{a}\in{\mathbb F}_2^n$,  $$\widehat{{\mathcal W}}_f(\bar{a})=2^{m}-2d_H(f,\bar{a}\cdot \bar{x}) \mbox{ and } -\widehat{{\mathcal W}}_f(\bar{a})=2^{m}-2d_H(f,\bar{a}\cdot \bar{x}\oplus 1).$$ Resolving $d_H$ on the left side, we obtain the result in both cases.

\end{proof}

\begin{theorem}\label{Parse}(Parseval's equation) Let $f:{\mathcal C}\subseteq {\mathbb F}_2^n \rightarrow {\mathbb F}_2$. Then, $$\sum_{\bar{a}\in {\mathbb F}_2^n} {\widehat{{\mathcal W}}}_f^2(\bar{a}) = 2^{m+n}.$$
\end{theorem}

\begin{proof}Resolving,
$$\sum_{\bar{a}\in {\mathbb F}_2^n} \widehat{{\mathcal W}}_f(\bar{a}){\widehat{\mathcal W}}_f(\bar{a})=\sum_{\bar{x},\bar{y}\in {{\mathcal C}}}(-1)^{f(\bar{x})\oplus f(\bar{y})} \sum_{\bar{a}\in {\mathbb F}_2^n}(-1)^{\bar{a}\cdot (\bar{x}\oplus \bar{y})}= 2^{m+n}.$$ 

The balancedness of the linear functions, together with the distinct elements $\bar{x}=\bar{y}$, yields the desired result.

\end{proof}

\begin{definition}\label{def:bent_affine}
The nonlinearity of a function $f:{\mathcal C}\rightarrow {\mathbb F}_2$ over an affine space ${\mathcal C} \subseteq {\mathbb F}_2^n$ ($\dim {\mathcal C}=m \leq n$) is defined as its minimum Hamming distance to the set of restricted affine functions:
\begin{equation}
    {\mathcal N}_{\mathcal C}(f):= \min \limits_{l\in {\mathcal A}_n} d_H(f, l_{|{\mathcal C}}).
\end{equation}
The function $f$ is bent over ${\mathcal C}$ if its nonlinearity achieves the maximum possible value.
\end{definition}

\begin{theorem}\label{Afinbent}
The function $f: {\mathcal C} \subseteq \mathbb{F}_2^n \rightarrow \mathbb{F}_2$, with $\dim {\mathcal C} = m$, is bent if and only if $|\widehat{{\mathcal W}}_f(\bar{a})| = 2^{m/2}$ for all $\bar{a} \in \mathbb{F}_2^n$.
\end{theorem}

\begin{proof}
According to Theorem \ref{Nonli}, a function is bent if the maximum value of its Walsh-Hadamard transform, $|\widehat{{\mathcal W}}_f|$, is as small as possible. We shall demonstrate that this minimum possible value is $2^{m/2}$.

Suppose that $\max_{\bar{a} \in \mathbb{F}_2^n} |\widehat{{\mathcal W}}_f(\bar{a})| < 2^{m/2}$. By Parseval's identity, which states that $\sum_{\bar{a} \in \mathbb{F}_2^n} (\widehat{{\mathcal W}}_f(\bar{a}))^2 = 2^{m+n}$, it would follow that there must exist some $\bar{a}' \in \mathbb{F}_2^n$ such that $|\widehat{{\mathcal W}}_f(\bar{a}')| > 2^{m/2}$ to satisfy the sum, which constitutes a contradiction.

Consequently, the minimum value that $\max_{\bar{a} \in \mathbb{F}_2^n} |\widehat{{\mathcal W}}_f(\bar{a})|$ can attain is $2^{m/2}$. Furthermore, Parseval's equality implies that if the maximum value is $2^{m/2}$, then every transform coefficient must satisfy $\widehat{{\mathcal W}}_f(\bar{a}) = \pm 2^{m/2}$ for all $\bar{a} \in \mathbb{F}_2^n$.
\end{proof}

Hence, bent functions over affine subspaces only exist when the dimension $m$ of the affine subspace is even, and they possess a non-linearity equal to $\mathcal{N}_f = 2^{m-1} - 2^{\frac{m}{2}-1}.$

\begin{example}
The existence of bent functions over affine spaces is guaranteed for various values of $n$. For instance, let $n=5$. The Boolean function 
\[ f(x_1, x_2, x_3, x_4, x_5) = x_2x_3 \oplus x_1x_4 \oplus x_1x_3 \oplus x_1x_2 \]
is an example of a bent function defined over the subspace of even Hamming weight vectors in $\mathbb{F}_2^5$. In this case, $|\widehat{{\mathcal W}}_f(\bar{a})| = 2^{4/2} = 4$ for all $\bar{a} \in \mathbb{F}_2^5$.
\end{example}

\medskip
The following theorem corresponds to a class of bent functions known as the \textbf{secondary Maiorana-McFarland construction}.

\begin{theorem}{\label{Maiorana}}\cite{CarletMaioranaExtendido}
Let the function $\phi(\bar{y}): \mathbb{F}_2^s \rightarrow \mathbb{F}_2^r$ be defined such that for all $\bar{a} \in \mathbb{F}_2^r$, the preimage $\phi^{-1}(\bar{a})$ forms an affine space of dimension $s - r$. Additionally, let $g(\bar{y}): \mathbb{F}_2^s \rightarrow \mathbb{F}_2$ be a function for which the restriction $g_{|\phi^{-1}(\bar{a})}$ is a bent function. We then define the function $f : \mathbb{F}_2^{r+s} \rightarrow \mathbb{F}_2$ by $f(\bar{x},\bar{y}) = \bar{x} \cdot \phi(\bar{y}) \oplus g(\bar{y})$, where $\bar{x} \in \mathbb{F}_2^r$. Under these conditions, the function $f$ is bent.
\end{theorem}

In the preceding theorem, we observe that bent functions defined over affine spaces of dimension $s-r$ can be used to construct bent functions over the vector space $\mathbb{F}_2^{r+s}$, which has dimension $r+s$.

The sets, vectors, and functions defined below follow this notation throughout the remainder of this work. To apply Theorem \ref{Maiorana} to the case $r=1$, we consider the following affine spaces:

$$H_{\bar{\alpha}} := \{\bar{x} \in \mathbb{F}_2^s \mid l_{\bar{\alpha}}(\bar{x}) = 0\} \quad \text{and} \quad H_{\bar{\alpha}}^c := \{\bar{x} \in \mathbb{F}_2^s \mid l_{\bar{\alpha}}(\bar{x}) = 1\}.$$ Similarly, we denote the sets $H_{\alpha}$ and $H_{\alpha}^c$ as:
\[ H_{\alpha} := \{x \in \mathbb{F}_2^{s-1} \mid l_{\alpha}(x) = 0\} \quad \text{and} \quad H_{\alpha}^c := \{x \in \mathbb{F}_2^{s-1} \mid l_{\alpha}(x) = 1\}, \]
where $\bar{x} = (x, x_s)$ and $\bar{\alpha} = (\alpha, \alpha_s)$, with $x, \alpha \in \mathbb{F}_2^{s-1}$ and $x_s, \alpha_s \in \mathbb{F}_2$.

Note that $H_{\bar{\alpha}}$ is a linear subspace with dimension $s-1$, and $H_{\bar{\alpha}}^c$ is an affine space such that for any $\bar{b} \notin H_{\bar{\alpha}}$, we have $\bar{b} \oplus H_{\bar{\alpha}} = H_{\bar{\alpha}}^c$. Furthermore, $H_{\alpha}$ is a linear subspace with dimension $s-2$, and $H_{\alpha}^c$ is an affine space such that for any $b \notin H_{\alpha}$, we have $b \oplus H_{\alpha} = H_{\alpha}^c$. 

By an abuse of notation, we represent the elements of $\mathbb{F}_2^s$ as matrices, where each row corresponds to an element of $\mathbb{F}_2^s$. Let $\bar{\alpha}=(\alpha_1,\ldots, \alpha_{s-1}, \alpha_s)$. Without loss of generality, we assume that $\alpha_s \neq 0$. Consequently, the elements of $\mathbb{F}_2^s$ can be partitioned as follows:

$$\left[ \begin{array}{c} H_{\bar{\alpha}} \\ \hline \rule{0pt}{2.4ex} H_{\bar{\alpha}}^c \end{array} \right] = 
\left[ \begin{array}{cc} H_{\alpha} & \bar{0} \\ H_{\alpha}^c & \bar{1} \\ \hline \rule{0pt}{2.4ex} H_{\alpha} & \bar{1} \\ H_{\alpha}^c & \bar{0} \end{array} \right],$$
where $\bar{0}$ and $\bar{1}$ are column vectors of dimension $2^{s-2} \times 1$ consisting entirely of zeros and ones, respectively. 

Observe that the sets $H_{\bar{\alpha}}$ and $H_{\bar{\alpha}}^c$ form a partition of ${\mathbb F}_2^s$, since $l_{\bar{\alpha}}$ is balanced due to its property as a linear function. Furthermore, in the matrix representation on the right-hand side, the columns $\bar{0}$ and $\bar{1}$ are required in the given order to preserve the corresponding images of $l_{\bar{\alpha}}$.

Let $g: \mathbb{F}_2^{s-1} \rightarrow \mathbb{F}_2$ be a bent function. From now on, $g_e: \mathbb{F}_2^s \rightarrow \mathbb{F}_2$ and $\phi: \mathbb{F}_2^s \rightarrow \mathbb{F}_2$ are functions (with the aim of applying Theorem \ref{Maiorana} to the specific case $r=1$) defined as follows:

\begin{itemize}
    \item $\phi^{-1}(0) = H_{\bar{\alpha}}$ and $\phi^{-1}(1) = H_{\bar{\alpha}}^c$. That is, $\phi = l_{\bar{\alpha}}$.
    \item The restrictions of $g_e$ are given by ${g_e}_{|H_{\bar{\alpha}}} := g_{e_0}$ and ${g_e}_{|H_{\bar{\alpha}}^c} := g_{e_1}$, where:
    \begin{itemize}
        \item $g_{e_0}: H_{\bar{\alpha}} \rightarrow \mathbb{F}_2$ is defined as $g_{e_0}(x, x_s) := g(x)$.
        \item $g_{e_1}: H_{\bar{\alpha}}^c \rightarrow \mathbb{F}_2$ is defined as $g_{e_1}(x, x_s) := g(x)$.
    \end{itemize}
\end{itemize}

\begin{remark}\label{observaciones}
In $H_{\bar{\alpha}}$, if $x \in \mathbb{F}_2^{s-1}$ satisfies $l_{\alpha}(x) = 0$, then $x_s = 0$. If $l_{\alpha}(x) = 1$, then $x_s = 1$. Conversely, in $H_{\bar{\alpha}}^c$, if $x \in \mathbb{F}_2^{s-1}$ satisfies $l_{\alpha}(x) = 0$, then $x_s = 1$. If $l_{\alpha}(x) = 1$, then $x_s = 0$.
\end{remark}

Theorem \ref{BentBalancness} explores the behaviour of bent functions when restricted to affine spaces. An alternative way to prove the assertion of this theorem is by observing that our function $g_e$ satisfies the conditions for ${\mathcal H}_{\alpha}$ according to the definition in \cite{Cosets}, being three-valued and almost-optimal as per condition (ii) of Theorem V.2 in \cite{Cosets}. Thus, by applying the aforementioned Theorem V.2, the assertion of our next theorem is resolved. In their work, the authors establish the bentness of the restricted function as a consequence of utilising the properties of Boolean derivatives and their Lemma B.1. This lemma employs the theory of Jacobi theta series and generating functions to characterise the number of representations of an integer as a sum of two squares. 

In contrast, by analysing the geometric partition of the space $\mathbb{F}_2^s$, the algebraic properties of the Walsh-Hadamard transform, and Theorem \ref{balancedness}, we prove that the bent property is preserved under these restrictions through a direct verification of the Fourier spectrum. 

\begin{theorem}\label{BentBalancness} 
Let $g: \mathbb{F}_2^{s-1} \rightarrow \mathbb{F}_2$ be a bent function. Then, the restricted functions $g_{e_0}: H_{\bar{\alpha}} \rightarrow \mathbb{F}_2$ and $g_{e_1}: H_{\bar{\alpha}}^c \rightarrow \mathbb{F}_2$ are also bent functions.
\end{theorem}

\begin{proof}
Let $\bar{a}=(a, a_s) \in \mathbb{F}_2^s$ and $\bar{x}=(x, x_s) \in H_{\bar{\alpha}}$. We denote the linear form as $l_a(x) = x \cdot a$, where $a, x \in \mathbb{F}_2^{s-1}$. The Walsh-Hadamard transform of $g_{e_0}$ is given by:
\begin{equation*}
\widehat{{\mathcal W}}_{g_{e_0}}(\bar{a}) = \sum_{\bar{x} \in H_{\bar{\alpha}}} (-1)^{g_{e_0}(\bar{x}) \oplus \bar{x} \cdot \bar{a}} = \sum_{(x, x_s) \in H_{\bar{\alpha}}} (-1)^{g_{e_0}(x, x_s) \oplus x \cdot a \oplus x_s a_s}.
\end{equation*}

\textbf{Case 1:} If $a_s = 0$, then:
\begin{equation*}
\widehat{{\mathcal W}}_{g_{e_0}}(a, 0) = \sum_{x \in \mathbb{F}_2^{s-1}} (-1)^{g(x) \oplus x \cdot a} = \widehat{{\mathcal W}}_{g}(a).
\end{equation*}
Since $g$ is a bent function, $|\widehat{{\mathcal W}}_{g}(a)| = 2^{(s-1)/2}$, which implies that $g_{e_0}$ satisfies the bent condition for this case.

\textbf{Case 2:} If $a_s = 1$, we utilise the previously established matrix ordering of $\mathbb{F}_2^s$ relative to $H_{\bar{\alpha}}$ and $H_{\bar{\alpha}}^c$:
\begin{eqnarray*}
\widehat{\mathcal W}_{g_{e_0}}(a, 1) &=& \sum_{(x, 0) \in H_{\bar{\alpha}}} (-1)^{g_{e_0}(x, 0) \oplus x \cdot a} + \sum_{(x, 1) \in H_{\bar{\alpha}}} (-1)^{g_{e_0}(x, 1) \oplus x \cdot a \oplus 1} \\
&=& \sum_{x \in H_{\alpha}} (-1)^{g(x) \oplus x \cdot a} - \sum_{x \in H_{\alpha}^c} (-1)^{g(x) \oplus x \cdot a}.
\end{eqnarray*}
The last equality follows from Remark \ref{observaciones}. We now consider the following subcases:
\begin{itemize}
    \item \textbf{Subcase 2a:} If $(g \oplus l_a)_{|H_{\alpha}}$ is balanced, then $\widehat{\mathcal W}_{g_{e_0}}(a, 1) = -\widehat{\mathcal W}_{g}(a)$.
    \item \textbf{Subcase 2b:} If $(g \oplus l_a)_{|H_{\alpha}^c}$ is balanced, then $\widehat{\mathcal W}_{g_{e_0}}(a, 1) = \widehat{\mathcal W}_{g}(a)$.
\end{itemize}
In both subcases, since $|\widehat{\mathcal W}_{g}(a)| = 2^{(s-1)/2}$, it follows that $|\widehat{\mathcal W}_{g_{e_0}}(a, 1)| = 2^{(s-1)/2}$. Thus, $g_{e_0}$ is a bent function. By symmetry, an analogous argument shows that $g_{e_1}: H_{\bar{\alpha}}^c \rightarrow \mathbb{F}_2$ is also a bent function.
\end{proof}

Alternatively, Case 2 can be resolved without explicitly invoking restricted balancedness by observing the shift in the Walsh spectrum:
\begin{eqnarray*}
\widehat{\mathcal W}_{g_{e_0}}(a, 1) &=& \sum_{x \in H_{\alpha}} (-1)^{g(x) \oplus x \cdot a} - \sum_{x \in H_{\alpha}^c} (-1)^{g(x) \oplus x \cdot a} \\
&=& \sum_{x \in H_{\alpha}} (-1)^{g(x) \oplus x \cdot a \oplus l_{\alpha}(x)} + \sum_{x \in H_{\alpha}^c} (-1)^{g(x) \oplus x \cdot a \oplus l_{\alpha}(x)} \\
&=& \sum_{x \in \mathbb{F}_2^{s-1}} (-1)^{g(x) \oplus x \cdot (a \oplus \alpha)} = \widehat{\mathcal W}_{g}(a \oplus \alpha).
\end{eqnarray*}
The second equality is obtained by the definition of $H_{\alpha}^c$, where $l_{\alpha}(x)=1$ implies $-1 = (-1)^{l_{\alpha}(x)}$. This alternative derivation shows that partitioning the space into affine subspaces does not alter the shape of the Fourier Spectrum (the set of values of the Walsh-Hadamard transform), but merely shifts it.

In the proof of Theorem \ref{BentBalancness}, the specific ordering of the elements in $\mathbb{F}_2^s$ allows us to treat the last coordinate as a constant within each summation. This choice was made without loss of generality by considering $\alpha_s \neq 0$; the results remain invariant regardless of which coordinate is fixed, provided it corresponds to a non-zero component of $\bar{\alpha}$.

\begin{corollary}\label{BentWalsh1} 
Let $g: \mathbb{F}_2^{s-1} \rightarrow \mathbb{F}_2$ be a bent function. For all $a \in \mathbb{F}_2^{s-1}$:
\begin{enumerate}
    \item $\widehat{{\mathcal W}}_{g}(a) = \widehat{{\mathcal W}}_{g_{e_0}}(a, 0) = \widehat{{\mathcal W}}_{g_{e_1}}(a, 0)$.
    \item If $(g \oplus l_a)_{|H_{\alpha}}$ is balanced, then:
    $\widehat{{\mathcal W}}_{g}(a) = 2^{\frac{s-1}{2}} \iff \widehat{{\mathcal W}}_{g_{e_0}}(a, 1) = -2^{\frac{s-1}{2}}$ and $\widehat{{\mathcal W}}_{g_{e_1}}(a, 1) = 2^{\frac{s-1}{2}}$.
    \item If $(g \oplus l_a)_{|H_{\alpha}^c}$ is balanced, then:
    $\widehat{{\mathcal W}}_{g}(a) = -2^{\frac{s-1}{2}} \iff \widehat{{\mathcal W}}_{g_{e_0}}(a, 1) = 2^{\frac{s-1}{2}}$ and $\widehat{{\mathcal W}}_{g_{e_1}}(a, 1) = -2^{\frac{s-1}{2}}$.
\end{enumerate}
\end{corollary}
\qed

%\begin{corollary}Let $g:{\mathbb F}_2^{s-1}\rightarrow {\mathbb F}_2$ be a bent function. Then, the bent function $g_{e_0}$ is balanced in ${\mathcal C}'_0|\bar{0}$ or is balanced in ${\mathcal C}'_1|\bar{1}.$ The bent function $g_{e_1}$ is balanced in ${\mathcal C}'_0|\bar{1}$ or is balanced in ${\mathcal C}'_1|\bar{0}.$
%\end{corollary}

\medskip
As anticipated, if a bent function is defined over one of the specified affine spaces, adding a linear function restricted to that same space results in another bent function. Furthermore, similarly to the bent function $g$, the functions $g_{e_0}$ and $g_{e_1}$ are balanced when their domains are restricted. This balancedness is preserved even upon the addition of a linear function. The following corollary details this relationship.

\begin{corollary}\label{balanceextende}
Let $g: \mathbb{F}_2^{s-1} \rightarrow \mathbb{F}_2$ be a bent function and let $\bar{a}=(a, a_s) \in \mathbb{F}_2^s$, where $a \in \mathbb{F}_2^{s-1}$ and $a_s \in \mathbb{F}_2$. Then, for all $\bar{a} \in \mathbb{F}_2^s$, the functions $g_{e_0} \oplus l_{\bar{a}}: H_{\bar{\alpha}} \rightarrow \mathbb{F}_2$ and $g_{e_1} \oplus l_{\bar{a}}: H_{\bar{\alpha}}^c \rightarrow \mathbb{F}_2$ are bent. 

Moreover, if $(g \oplus l_a)_{|H_{\alpha}}$ is balanced, then $g_{e_0} \oplus l_{(a, a_s)}$ is balanced when restricted to $H_{\alpha} \times \{\bar{0}\}$, and $g_{e_1} \oplus l_{(a, a_s)}$ is balanced when restricted to $H_{\alpha} \times \{\bar{1}\}$. Conversely, if $(g \oplus l_a)_{|H_{\alpha}^c}$ is balanced, then $g_{e_0} \oplus l_{(a, a_s)}$ is balanced when restricted to $H_{\alpha}^c \times \{\bar{1}\}$ and $g_{e_1} \oplus l_{(a, a_s)}$ is balanced when restricted to $H_{\alpha}^c \times \{\bar{0}\}$.
\end{corollary}

\begin{proof}
First, we verify that the resulting functions are bent. Given that $|\widehat{{\mathcal W}}_{g_{e_i}}(\bar{b})| = 2^{(s-1)/2}$ for $i \in \{0, 1\}$ and for all $\bar{b} \in \mathbb{F}_2^s$, the linearity of $l_{\bar{a}}$ implies that $g_{e_i} \oplus l_{\bar{a}}$ remains a bent function for all $\bar{a} \in \mathbb{F}_2^s$. Specifically:
\begin{equation*}
\widehat{\mathcal W}_{g_{e_0} \oplus l_{\bar{a}}}(\bar{b}) = \sum_{\bar{x} \in H_{\bar{\alpha}}} (-1)^{g_{e_0}(\bar{x}) \oplus \bar{x} \cdot \bar{a} \oplus \bar{x} \cdot \bar{b}} = \sum_{\bar{x} \in H_{\bar{\alpha}}} (-1)^{g_{e_0}(\bar{x}) \oplus \bar{x} \cdot (\bar{a} \oplus \bar{b})} = \pm 2^{\frac{s-1}{2}}.
\end{equation*}

Regarding the balancedness property, since $g \oplus l_a$ is a bent function, by Theorem \ref{balancedness}, we assume that $g \oplus l_a: \mathbb{F}_2^{s-1} \rightarrow \mathbb{F}_2$ is balanced when restricted to $H_{\alpha}$. Since the value $a_s \in \mathbb{F}_2$ remains constant within each of the domains $H_{\alpha} \times \{\bar{0}\}$ and $H_{\alpha} \times \{\bar{1}\}$, and given that the values of $g$, $g_{e_0}$, and $g_{e_1}$ coincide on these points, it follows that $g_{e_0} \oplus l_{(a, a_s)}$ is balanced on $H_{\alpha} \times \{\bar{0}\}$ and $g_{e_1} \oplus l_{(a, a_s)}$ is balanced on $H_{\alpha} \times \{\bar{1}\}$.

An analogous argument applies if the balancedness restricted to $H_{\alpha}^c$ is considered instead.
\end{proof}

In the preceding corollary, it is crucial to note that we have carefully utilised the corresponding domains for each defined function. Furthermore, the balancedness condition is derived from $g \oplus l_a$ rather than $g$ itself, as the addition of a linear function may alter the specific balanced subset of $g$.

\medskip
To clarify the transitions among shifting dimensions, we move from the general $n$-variable framework to a recursive setup. Here, the base components operate on $\mathbb{F}_2^{s-1}$ and are extended to $\mathbb{F}_2^s$ and $\mathbb{F}_2^{s+1}$. To avoid unnecessary friction regarding the simultaneous use of vectors and hyperplanes in different ambient spaces, a summary of notation is provided in Table \ref{tab:notation}.

\begin{table}[h!]
\centering
\caption{Summary of notation, spaces, and domains for the secondary construction.}
\label{tab:notation}
\begin{tabular}{ll}
\hline
\textbf{Notation}  & \textbf{Description} \\ \hline
$x\in\mathbb{F}_2^{s-1}$ & Vector of dimension $s-1$. \\
$\bar{x}\in \mathbb{F}_2^s$ & Vector of dimension $s$, decomposed as $(x, x_s)$ with $x_s \in \mathbb{F}_2$. \\
$(x_0, \bar{x})\in \mathbb{F}_2^{s+1}$ & Concatenated vector of dimension $s+1$ with $x_0 \in \mathbb{F}_2$. \\
$\alpha \in \mathbb{F}_2^{s-1}$ & Vector defining linear/affine structures in lower dimensions. \\
$\bar{\alpha} \in \mathbb{F}_2^s$ & Vector defining hyperplanes in $\mathbb{F}_2^s$. \\
$\bar{\beta}\in \mathbb{F}_2^{s+1}$ & Vector defined as $(\alpha_0, \alpha, \alpha_s)$ with $\alpha \in \mathbb{F}_2^{s-1}$ and $\alpha_0, \alpha_s \in \mathbb{F}_2$. \\
$H_{\bar{\alpha}} \subset \mathbb{F}_2^s$ & Linear hyperplane in $\mathbb{F}_2^s$ defined by $\bar{\alpha}$ ($\bar{\alpha} \cdot \bar{x} = 0$). \\
$H_{\bar{\alpha}}^c\subset \mathbb{F}_2^s$ & Complementary affine hyperplane in $\mathbb{F}_2^s$ ($\bar{\alpha} \cdot \bar{x} = 1$). \\
$H_{\bar{\beta}}\subset \mathbb{F}_2^{s+1}$ & Linear hyperplane in $\mathbb{F}_2^{s+1}$ defined by $\bar{\beta}$ ($\bar{\beta} \cdot (x_0, \bar{x}) = 0$). \\
$H_{\bar{\beta}}^c \subset \mathbb{F}_2^{s+1}$ & Complementary affine hyperplane in $\mathbb{F}_2^{s+1}$ ($\bar{\beta} \cdot (x_0, \bar{x}) = 1$). \\
$g:\mathbb{F}_2^{s-1} \rightarrow \mathbb{F}_2$ & Bent function of lower dimension. \\
$g_e: \mathbb{F}_2^s \rightarrow \mathbb{F}_2$ & Extended function satisfying $g_e(x, x_s) = g(x)$. \\
$g_{e_0}, g_{e_1}: H_{\bar{\alpha}}, H_{\bar{\alpha}}^c \rightarrow \mathbb{F}_2$ & Restrictions of $g_e$ onto the hyperplanes $H_{\bar{\alpha}}$ and $H_{\bar{\alpha}}^c$. \\ \hline
\end{tabular}
\end{table}

\section{Construction of bent functions via the Maiorana-McFarland secondary construction} 
\label{Secondary Constrution}

From this point forward, let us consider the element $\bar{\beta} = (\alpha_0, \alpha, \alpha_s)$, where $\alpha \in \mathbb{F}_2^{s-1}$ and $\alpha_0, \alpha_s \in \mathbb{F}_2$. We further assume that both $\alpha_0$ and $\alpha_s$ are non-zero. In a manner similar to our previous definitions, we define the set:
\[ H_{\bar{\beta}} := \{ (x_0, \bar{x}) \in \mathbb{F}_2^{s+1} \mid l_{\bar{\beta}}(x_0, \bar{x}) = 0 \}, \]
where $\bar{x} = (x, x_s) \in \mathbb{F}_2^s$, $x \in \mathbb{F}_2^{s-1}$, and $x_0 \in \mathbb{F}_2$.

\medskip
In Theorems \ref{balancedextended} and \ref{UsingMaiorana}, we employ $\phi$ and $g_e$ as previously defined. Recall that $\phi^{-1}(0) = H_{\bar{\alpha}}$ and $\phi^{-1}(1) = H_{\bar{\alpha}}^c$.

We are now prepared to apply the secondary Maiorana-McFarland construction for the particular case where $r=1$. In the following theorem, we construct a bent function over a higher-dimensional domain; furthermore, we demonstrate that the balancedness property is specifically attained over the vector subspace $H_{\bar{\beta}}$.

\begin{theorem}\label{balancedextended}
Let $\bar{\beta} = (\alpha_0, \alpha, \alpha_s)$, with $\alpha \in \mathbb{F}_2^{s-1}$ and $\alpha_0, \alpha_s \in \mathbb{F}_2 \setminus \{0\}$. Let $g: \mathbb{F}_2^{s-1} \rightarrow \mathbb{F}_2$ be a bent function. Then, the function 
\[ f: \mathbb{F}_2^{s+1} \rightarrow \mathbb{F}_2, \quad (x_0, \bar{x}) \mapsto x_0 \phi(\bar{x}) \oplus g_e(\bar{x}), \]
where $x_0 \in \mathbb{F}_2$ and $\bar{x} \in \mathbb{F}_2^s$, is a bent function, and its restriction $f_{|H_{\bar{\beta}}}$ is balanced.
\end{theorem}

\begin{proof}
Since $\phi$ and $g_e$ satisfy the requirements of Theorem \ref{Maiorana}, it follows immediately that $f$ is a bent function.

To examine the balancedness of $f$, let $\bar{x} = (x, x_s)$ with $x \in \mathbb{F}_2^{s-1}$ and $x_s \in \mathbb{F}_2$. The function is defined as $f(x_0, x, x_s) = x_0 \phi(x, x_s) \oplus g_e(x, x_s)$. We consider the following four cases for $g$:
\begin{itemize}
    \item $g_{|H_{\alpha}}$ is balanced and $\widehat{\mathcal{W}}_{g}(\bar{0}) = 2^{(s-1)/2}$.
    \item $g_{|H_{\alpha}}$ is balanced and $\widehat{\mathcal{W}}_{g}(\bar{0}) = -2^{(s-1)/2}$.
    \item $g_{|H_{\alpha}^c}$ is balanced and $\widehat{\mathcal{W}}_{g}(\bar{0}) = 2^{(s-1)/2}$.
    \item $g_{|H_{\alpha}^c}$ is balanced and $\widehat{\mathcal{W}}_{g}(\bar{0}) = -2^{(s-1)/2}$.
\end{itemize}

Assume the case where $g_{|H_{\alpha}}$ is balanced and $\widehat{\mathcal{W}}_{g}(\bar{0}) = 2^{(s-1)/2}$. The elements within the subspace $H_{\bar{\beta}}$ are partitioned into the following four cases:

\textbf{Case 1.} Let $x_0=0$, $x_s=0$, and $x \in H_{\alpha}$. Then, given the balancedness over $H_{\alpha}$ and Remark \ref{DistriWeight}, it follows that:
\[ w_H(f_{|(0, H_{\alpha}, 0)}) = w_H({g_{e_0}}_{|(H_{\alpha}, 0)}) = 2^{s-3}. \]

\textbf{Case 2.} Let $x_0=0$, $x_s=1$, and $x \in H_{\alpha}^c$. In this case, the function corresponds to the restriction of $g_{e_0}$ to the affine shift of the subspace. Following the weight distribution established in Remark \ref{DistriWeight}, we have:
\[ w_H(f_{|(0, H_{\alpha}^c, 1)}) = w_H({g_{e_0}}_{|(H_{\alpha}^c, 1)}) = 2^{s-3} - 2^{(s-3)/2}. \]

\textbf{Case 3.} Let $x_0=1$, $x_s=0$, and $x \in H_{\alpha}^c$. Here, the function is defined by the complement $1 \oplus g_{e_1}$. By applying Remark \ref{DistriWeight} and considering the complementary weight properties, it follows that:
\[ w_H(f_{|(1, H_{\alpha}^c, 0)}) = w_H((1 \oplus g_{e_1})_{|(H_{\alpha}^c, 0)}) = 2^{s-3} + 2^{(s-3)/2}. \]

\textbf{Case 4.} Let $x_0=1$, $x_s=1$, and $x \in H_{\alpha}$. Then, given the balancedness over $H_{\alpha}$ and Remark \ref{DistriWeight}, it follows that:
\[ w_H(f_{|(1, H_{\alpha}, 1)}) = w_H((1 \oplus g_{e_1})_{|(H_{\alpha}, 1)}) = 2^{s-3}. \]

Combining the contributions from these four cases, we obtain the total weight:
\[ \#f_{|H_{\bar{\beta}}}^{-1}(1) = 2^{s-3} + (2^{s-3} - 2^{(s-3)/2}) + (2^{s-3} + 2^{(s-3)/2}) + 2^{s-3} = 4(2^{s-3}) = 2^{s-1}. \]
Since the cardinality of $H_{\bar{\beta}}$ is $2^s$, the function $f_{|H_{\bar{\beta}}}$ is balanced. An analogous result is obtained for the remaining cases.
\end{proof}

It should be noted that the non-zero component of $\alpha$ is not strictly required to be $\alpha_s$. Furthermore, with slight modifications to the theorem, the condition on the element $\bar{\beta} \in \mathbb{F}_2^{s+1}$ can be weakened, requiring only that at least two of its components are non-zero.

\medskip
We have established the balancedness of a bent function restricted to a specific affine space utilising the secondary Maiorana-McFarland construction. Our objective now is to determine the effect on this balance when a linear function is added to the resulting bent function. The balancedness results for a specific affine space do not seem to apply directly to the obtained bent function.   

To prove Theorem \ref{UsingMaiorana}, we begin with the function $g_e \oplus l_{(a, a_s)}$ and examine the various balancedness cases detailed in Corollary \ref{balanceextende}, leveraging Remark \ref{DistriWeight}. Consequently, in the proof of the following theorem, when we refer to the balancedness of the function $g_e \oplus l_{(a, a_s)}$ over a set (for instance, $H_{\alpha} \times \{ \bar{0} \}$), we are specifically addressing the balancedness of the corresponding restricted function (in this case, $g_{e_0} \oplus l_{(a, a_s)}$).

\begin{theorem}\label{UsingMaiorana} 
Let $g: \mathbb{F}_2^{s-1} \rightarrow \mathbb{F}_2$ be a bent function and let 
\[ f: \mathbb{F}_2^{s+1} \rightarrow \mathbb{F}_2, \quad (x_0, \bar{x}) \mapsto x_0 \phi(\bar{x}) \oplus g_e(\bar{x}), \]
where $x_0 \in \mathbb{F}_2$ and $\bar{x} \in \mathbb{F}_2^s$, be a bent function obtained via the secondary Maiorana-McFarland construction. Then, the function $f \oplus l_{(a_0, \bar{a})}$, where $l_{(a_0, \bar{a})}(x_0, \bar{x}) = a_0 x_0 \oplus \bar{a} \cdot \bar{x}$ with $a_0 \in \mathbb{F}_2$ and $\bar{a} = (a_1, \dots, a_s) \in \mathbb{F}_2^s$, satisfies the following conditions:
\begin{enumerate}
    \item If $(a_0=0 \wedge a_s=0) \vee (a_0=1 \wedge a_s=1)$, then $(f \oplus l_{(a_0, \bar{a})})_{|H_{\bar{\beta}}}$ is balanced.
    \item If $(a_0=0 \wedge a_s=1) \vee (a_0=1 \wedge a_s=0)$, then $(f \oplus l_{(a_0, \bar{a})})_{|H_{\bar{\beta}}^c}$ is balanced.
\end{enumerate}
\end{theorem}

\begin{proof}
Let $\bar{a}=(a, a_s)$ and $\bar{x}=(x, x_s)$, where $a, x \in \mathbb{F}_2^{s-1}$ and $a_s, x_s \in \mathbb{F}_2$. There are four possibilities for the pair $(a_0, a_s)$. For each case, we assume the balancedness of the function $g_e \oplus l_{(a, a_s)}$ over each of the following four sets:
\[ H_{\alpha} \times \{ \bar{0} \}, \quad H_{\alpha}^c \times \{ \bar{1} \}, \quad H_{\alpha} \times \{ \bar{1} \}, \quad \text{and} \quad H_{\alpha}^c \times \{ \bar{0} \}. \]
Additionally, we consider the corresponding value of $\widehat{\mathcal{W}}_{g_e \oplus l_{(a, a_s)}}(\bar{0})$, which equals either $2^{(s-1)/2}$ or $-2^{(s-1)/2}$ in the respective complements of the considered affine spaces:
\[ H_{\alpha}^c \times \{ \bar{1} \}, \quad H_{\alpha} \times \{ \bar{0} \}, \quad H_{\alpha}^c \times \{ \bar{0} \}, \quad \text{and} \quad H_{\alpha} \times \{ \bar{1} \}. \]

The function is expressed as:
\[ (f \oplus l_{(a_0, a, a_s)})(x_0, x, x_s) = x_0 \phi(x, x_s) \oplus g_e(x, x_s) \oplus l_{(a, a_s)}(x, x_s) \oplus a_0 x_0. \]

Without loss of generality, let us examine the case where $a_0=0$ and $a_s=1$, assuming $g_e \oplus l_{(a, 1)}$ is balanced in $H_{\alpha} \times \{ \bar{1} \}$ and $\widehat{\mathcal{W}}_{g_e \oplus l_{(a, 1)}}(\bar{0}) = -2^{(s-1)/2}$ in $H_{\alpha}^c \times \{ \bar{0} \}$. The elements $(x_0, x, x_s) \in \mathbb{F}_2^{s+1}$ such that $l_{\bar{\beta}}(x_0, x, x_s) = 1$ are partitioned into the following four cases:

\textbf{Case 1.} Let $x_0=0$, $x_s=0$, and $l_{\alpha}(x)=1$. Since $g_e \oplus l_{(a, 1)}$ is not balanced and $\widehat{\mathcal{W}}_{g_e \oplus l_{(a, 1)}}(\bar{0}) = -2^{(s-1)/2}$ in $H_{\alpha}^c \times \{ \bar{0} \}$, we have:
\[ w_H((f \oplus l_{(0, a, 1)})_{|(0, H_{\alpha}^c, 0)}) = w_H((g_{e_1} \oplus l_{(a, 1)})_{|(H_{\alpha}^c, 0)}) = 2^{s-3} + 2^{(s-3)/2}. \]

\textbf{Case 2.} Let $x_0=0$, $x_s=1$, and $l_{\alpha}(x)=0$. Since $g_e \oplus l_{(a, 1)}$ is balanced in $H_{\alpha} \times \{ \bar{1} \}$, we have:
\[ w_H((f \oplus l_{(0, a, 1)})_{|(0, H_{\alpha}, 1)}) = w_H((g_{e_1} \oplus l_{(a, 1)})_{|(H_{\alpha}, 1)}) = 2^{s-3}. \]

\textbf{Case 3.} Let $x_0=1$, $x_s=0$, and $l_{\alpha}(x)=0$. Since $g_e \oplus l_{(a, 1)}$ is balanced in $H_{\alpha} \times \{ \bar{0} \}$, we have:
\[ w_H((f \oplus l_{(0, a, 1)})_{|(1, H_{\alpha}, 0)}) = w_H((g_{e_0} \oplus l_{(a, 1)})_{|(H_{\alpha}, 0)}) = 2^{s-3}. \]

\textbf{Case 4.} Let $x_0=1$, $x_s=1$, and $l_{\alpha}(x)=1$. Since $g_e \oplus l_{(a, 1)}$ is not balanced and $\widehat{\mathcal{W}}_{\text{g}_e \oplus l_{(a, 1)}}(\bar{0}) = 2^{(s-1)/2}$ in $H_{\alpha}^c \times \{ \bar{1} \}$, we have:
\[ w_H((f \oplus l_{(0, a, 1)})_{|(1, H_{\alpha}^c, 1)}) = w_H((g_{e_0} \oplus l_{(a, 1)})_{|(H_{\alpha}^c, 1)}) = 2^{s-3} - 2^{(s-3)/2}. \]

Combining these four cases, we obtain:
\[ \#(f \oplus l_{(a_0, a, a_s)})_{|H_{\bar{\beta}}^c}^{-1}(1) = 2^{s-1}. \]
Therefore, $f_{|H_{\bar{\beta}}^c}$ is balanced. Similar results are obtained for the remaining cases.
\end{proof}

It is evident that the restricted balancedness of the bent function generated via the secondary Maiorana-McFarland construction, under the addition of the linear function $l_{(a_0, a, a_s)}$, depends solely on the parameters $a_0$ and $a_s$. Furthermore, these elements explicitly determine the specific affine spaces where the balancedness property is satisfied.

\medskip
By analysing all the cases related to the proofs of Theorems \ref{balancedextended} and \ref{UsingMaiorana}, including those instances not explicitly resolved due to their similarity, we derive Algorithms \ref{tb1} and \ref{tb2}. These algorithms are specifically designed for the case where $\alpha = \bar{1}$ at each iteration. In this framework, $\alpha$ assumes the appropriate dimension for the vector space at each step, defining the affine spaces as the sets of vectors with even and odd Hamming weights, respectively. To generalise these results for any value of $\alpha$, the conditions $\alpha_0 \neq 0$ and $\alpha_s \neq 0$ must be maintained at each iteration, following the notation established previously.

In these algorithms, we define the sets $\mathcal{A}$ and $\mathcal{B}$, where $\mathcal{A}$ contains vectors of length $o$ with even Hamming weights, and $\mathcal{B}$ consists of vectors of length $o$ with odd Hamming weights. Furthermore, we denote $l_a(x) := a_1x_1 \oplus \dots \oplus a_rx_r$, for $a, x \in \mathbb{F}_2^r$ and any positive integer $r$. In the first algorithm, we derive bent functions for any even dimension in the domain that exceeds the dimension of the given bent function. These newly constructed bent functions are explicitly balanced when the domain is restricted to the set of vectors with even Hamming weight. In the second algorithm, we examine the sum of a linear function and identify the specific conditions under which its balancedness is preserved. This restricted balancedness is uniquely determined by the specific linear components, allowing for the precise identification of the affine spaces where the property holds.

\medskip
In Algorithm 1, the implementation of the assignment steps within the loop is directly derived from the combinatorial case-by-case construction proven in Theorem \ref{balancedextended}. The conditional (if-else) structures process the binary combinations of the external coordinates $(x'^n, x''^n)$ (corresponding to $x_0$ and $x_s$ in the proof) by considering whether the Hamming weight $w_H(x^n)$ is even or odd, which identifies membership in $H_{\alpha}$ or $H_{\alpha}^c$. Note also that the function $1 \oplus g_n(x^n)$ in the algorithm is analogue to Cases 3 and 4 of the Theorem, whilst the function $g_n(x^n)$ is analogue to Cases 1 and 2. Consequent to the assertion of Theorem \ref{balancedextended}, the algorithm consistently preserves the weights $2^{s-3}$ and $2^{s-3} \pm 2^{(s-3)/2}$ required to reach the total balanced weight of $2^{s-1}$. The algorithm iterates over the dimension parameter $n$, starting from an initial even dimension $s-1$ and continuing until the target dimension $o$ is attained. At each step of the while loop, the dimension of the constructed bent function increases exactly by 2 because the secondary Maiorana-McFarland framework extends the domain by incorporating two external bits, $x'^n$ and $x''^n$, mapping the input function $g_n$ of dimension $n$ to an expanded function $g_{n+2}$ of dimension $n+2$. The input assumptions that guarantee the deterministic termination of this process are that the target dimension $o$ is a finite integer such that $o > s-1$, and that the seed function $g_{s-1}$ is a valid bent function. Since $n$ increases strictly by a constant step of 2 at each iteration, the condition $n < o$ becomes false after exactly $\frac{o - (s-1)}{2}$ steps, ensuring that the algorithm halts.

\begin{algorithm}
    \caption{{\tt Maiorana-McFarland secondary construction} $r=1$}
    \label{tb1}
    \begin{minipage}{\hsize}
    \begin{algorithmic}[1]
        \Require $s-1\geq 2$ even, $g_{s-1}: {\mathbb F}_2^{s-1}\rightarrow {\mathbb F}_2$ be a bent function, $x'^{s-1}, x''^{s-1}\in{\mathbb F}_2, x^{s-1}\in {\mathbb F}_2^{s-1}$, and the target dimension $o$ (even integer with $o > s-1$).
        \Ensure $g_{o}$ a bent function, ${g_{o}}_{|{\mathcal A}}$ balanced
        \State $n:=s-1;$
        \While{$n < o$}
            \For{$x'^n,x''^n$ from $0$ to $1$}
                \If{($w_H(x^n)$ is even, $x'^n=1$, $x''^n=1$) \textbf{or} ($w_H(x^n)$ is odd, $x'^n=1$, $x''^n=0$)}
                    \State $g_{n+2}(x'^n,x^n,x''^n)=1\oplus g_{n}(x^n);$
                \Else
                    \State $g_{n+2}(x'^n,x^n,x''^n)=g_{n}(x^n);$
                \EndIf
            \EndFor
            \State $x^n=(x'^n,x^n,x''^n);$ ~ $n:= n+2;$        
        \EndWhile
        \State \Return $g_{o};$
    \end{algorithmic}
    \end{minipage}
\end{algorithm}

\medskip
Algorithm 2 formalises the iterative construction and restricted balancedness of bent functions under linear perturbations derived from Theorem \ref{UsingMaiorana}. The algorithm iterates over the dimension parameter $n$, starting from an initial even dimension $s-1$ until the target dimension $o$ is reached. At each iteration of the while loop, the dimension increases exactly by 2 because the construction incorporates two external bits, $x'^n$ and $x''^n$, and two external linear shift bits, $a'^n$ and $a''^n$, mapping the linear perturbation $g_n \oplus l_{a^n}$ over $\mathbb{F}_2^n$ to an expanded function $g_{n+2}\oplus l_{(a'^n,a^n,a''^n)}$. The code branches into four blocks depending on the cases of $(a'^n, a''^n)$ to evaluate the specific weight distributions that guarantee restricted balancedness over the corresponding affine spaces $\mathcal{A}$ or $\mathcal{B}$. The four primary blocks of this algorithm are directly derived from the four case-by-case partitions analysed in the proof of Theorem \ref{balancedextended}. The input assumptions ensuring the deterministic termination of this algorithm are that the target dimension $o$ is a finite integer satisfying $o > s-1$ and that the initial function $g_{s-1}$ is a valid bent function. Since the dimension $n$ increases by a constant step of 2 at the end of each valid branch, the condition $n < o$ becomes false after exactly $\frac{o - (s-1)}{2}$ steps, guaranteeing that the execution halts.

\begin{algorithm}
    \caption{{\tt Maiorana-McFarland secondary construction} $\oplus linear ~r=1$}
    \label{tb2}
    \begin{minipage}{\hsize}
    \begin{algorithmic}[1]
        \Require $s-1\geq 2$ even, $g_{s-1}: {\mathbb F}_2^{s-1}\rightarrow {\mathbb F}_2$ be a bent function, $l_{a^{s-1}}$ a linear function on ${\mathbb F}_2^{s-1},$ $x'^{s-1}, a'^{s-1}, x''^{s-1}, a''^{s-1} \in{\mathbb F}_2, x^{s-1}, a^{s-1}\in {\mathbb F}_2^{s-1}$, and the target dimension $o$ (even integer with $o > s-1$).
        \Ensure $g_{o}\oplus l_{(a'^{o-2},a^{o-2},a''^{o-2})}$ a bent function. If $(a'^{o-2}=0 \wedge a''^{o-2}=0) \vee (a'^{o-2}=1 \wedge a''^{o-2}=1)$, $({g_{o}} \oplus l_{(a'^{o-2},a^{o-2},a''^{o-2})})_{|{\mathcal A}}$ balanced. If $(a'^{o-2}=0 \wedge a''^{o-2}=1) \vee (a'^{o-2}=1 \wedge a''^{o-2}=0)$, $({g_{o}} \oplus l_{(a'^{o-2},a^{o-2},a''^{o-2})})_{|{\mathcal B}}$ balanced
        \State $n:=s-1;$
        \While{$n < o$}
            \If{$a'^n = 0$ \textbf{and} $a''^n=0$}
                \For{$x'^n, x''^n$ from $0$ to $1$}
                    \If{($w_H(x^n)$ is even, $x'^n=1$, $x''^n=1$) or ($w_H(x^n)$ is odd, $x'^n=1$, $x''^n=0$)}
                        \State $(g_{n+2}\oplus l_{(0,a^n,0)})(x'^n,x^n,x''^n)=1\oplus (g_{n}\oplus l_{a^n})(x^n);$
                    \Else
                        \State $(g_{n+2}\oplus l_{(0,a^n,0)})(x'^n,x^n,x''^n)=(g_{n}\oplus l_{a^n})(x^n);$
                    \EndIf                    
                \EndFor
                \State $x^n=(x'^n,x^n,x''^n);$ ~$a^n=(a'^n,a^n,a''^n);$ ~$n:= n+2;$
            \EndIf
            \If{$a'^n = 0$ \textbf{and} $a''^n=1$}
                \For{$x'^n, x''^n$ from $0$ to $1$}
                    \If{($w_H(x^n)$ is even, $x'^n=0$, $x''^n=1$) or \{($w_H(x^n)$ is odd, [($x'^n=0$, $x''^n=1$) or ($x'^n=1$, $x''^n=0$) or ($x'^n=1$, $x''^n=1$)]\}}
                        \State $(g_{n+2}\oplus l_{(0,a^n,1)})(x'^n,x^n,x''^n)=1\oplus (g_{n}\oplus l_{a^n})(x^n);$
                    \Else
                        \State $(g_{n+2}\oplus l_{(0,a^n,1)})(x'^n,x^n,x''^n)=(g_{n}\oplus l_{a^n})(x^n);$
                    \EndIf
                \EndFor
                \State $x^n=(x'^n,x^n,x''^n);$ ~$a^n=(a'^n,a^n,a''^n);$ ~$n:= n+2;$             \EndIf
            \If{$a'^n = 1$ \textbf{and} $a''^n=0$}
                \For{$x'^n, x''^n$ from $0$ to $1$}
                    \If{($w_H(x^n)$ is even, $x'^n=1$, $x''^n=0$) or ($w_H(x^n)$ is odd, $x'^n=1$, $x''^n=1$)}
                        \State $(g_{n+2}\oplus l_{(1,a^n,0)})(x'^n,x^n,x''^n)=1\oplus (g_{n}\oplus l_{a^n})(x^n);$
                    \Else
                        \State $(g_{n+2}\oplus l_{(1,a^n,0)})(x'^n,x^n,x''^n)=(g_{n}\oplus l_{a^n})(x^n);$
                    \EndIf
                \EndFor
                \State $x^n=(x'^n,x^n,x''^n);$ ~$a^n=(a'^n,a^n,a''^n);$ ~$n:= n+2;$ 
            \EndIf
            \If{$a'^n = 1$ \textbf{and} $a''^n=1$}
                \For{$x'^n, x''^n$ from $0$ to $1$}
                    \If{\{$w_H(x^n)$ is even, [($x'^n=0$, $x''^n=1$) or ($x'^n=1$, $x''^n=0$) or ($x'^n=1$, $x''^n=1$)]\} or ($w_H(x^n)$ is odd, $x'^n=0$, $x''^n=1$)}
                        \State $(g_{n+2}\oplus l_{(1,a^n,1)})(x'^n,x^n,x''^n)=1\oplus (g_{n}\oplus l_{a^n})(x^n);$
                    \Else
                        \State $(g_{n+2}\oplus l_{(1,a^n,1)})(x'^n,x^n,x''^n)=(g_{n}\oplus l_{a^n})(x^n);$
                    \EndIf
                \EndFor
                \State $x^n=(x'^n,x^n,x''^n);$ ~$a^n=(a'^n,a^n,a''^n);$ ~$n:= n+2;$  
            \EndIf
        \EndWhile
        \State \Return $g_{o}\oplus l_{(a'^{o-2},a^{o-2},a''^{o-2})};$
    \end{algorithmic}
    \end{minipage}
\end{algorithm}

\begin{example}
To illustrate Algorithm 1 and Algorithm 2, we present an example expanding a seed bent function from dimension $n=2$ to $n=4$. Subsequently, its balancedness under a linear perturbation is verified.

{\bf Step 1: Initialisation with a Seed Function ($n=2$)}

Let $s-1 = 2$. The initial vector space is $\mathbb{F}_2^2$ with elements denoted as $x = (x_1, x_2)$. We define the standard seed bent function $g_2: \mathbb{F}_2^2 \rightarrow \mathbb{F}_2$ as follows:
\begin{equation}
    g_2(x_1, x_2) = x_1 x_2.
\end{equation}

{\bf Step 2: Iterative Expansion to $n=4$ via Algorithm 1}

We set the target dimension to $o = 4$. Algorithm 1 enters the while loop with $n = 2$. For each vector, two external bits are incorporated (a left bit $x'$ and a right bit $x''$) generating the expanded vector $(x', x_1, x_2, x'') \in \mathbb{F}_2^4$. 

The conditional assignment maps the input function according to the following criteria:
\begin{equation}
    g_4(x', x_1, x_2, x'') = 
    \begin{cases} 
    1 \oplus g_2(x_1, x_2) & \text{if } (w_H(x) \text{ is even } \wedge x'=1 \wedge x''=1) \\
                           & \text{or } (w_H(x) \text{ is odd } \wedge x'=1 \wedge x''=0), \\
    g_2(x_1, x_2)          & \text{otherwise.}
    \end{cases}
\end{equation}

By exhaustively executing these loop iterations across all 16 cases, we obtain the algebraic normal form for $g_4$:
\begin{equation}
    g_4(x_1, x_2, x_3, x_4) = x_2 x_3 \oplus x_1 x_4 \oplus x_1 x_2 \oplus x_1 x_3,
\end{equation}
where variables are re-indexed for standard notation ($x' \to x_1$, $x_1 \to x_2$, $x_2 \to x_3$, $x'' \to x_4$).

{\bf Step 3: Truth Table}

Table \ref{truth_table_g4} presents the outputs for the bent function $g_4$, $g_4 \oplus l_{(0,0,0,1)}$, and $g_4 \oplus l_{(0,1,0,1)}.$

\begin{table}[h]
\centering
\caption{Complete truth table for the extended bent function $g_4(x_1, x_2, x_3, x_4)$ and sums of linear functions.}
\label{truth_table_g4}
\begin{tabular}{ccccccc}
\hline
$x_1$ & $x_2$ & $x_3$ & $x_4$ & \multicolumn{1}{|c}{$g_4(x_1, x_2, x_3, x_4)$} & \multicolumn{1}{|c}{$g_4(x_1, x_2, x_3, x_4)\oplus x_4$} & \multicolumn{1}{|c}{$g_4(x_1, x_2, x_3, x_4) \oplus x_2\oplus x_4$} \\ \hline
0    & 0     & 0     & 0     & 0                        & 0 & 0 \\
1    & 1     & 0     & 0     & 1                        & 1 & 0 \\
0    & 1     & 1     & 0     & 1                        & 1 & 0 \\
1    & 0     & 1     & 0     & 1                        & 1 & 1 \\
0    & 0     & 1     & 1     & 0                        & 1 & 1 \\
1    & 1     & 1     & 1     & 0                        & 1 & 0 \\
0    & 1     & 0     & 1     & 0                        & 1 & 0 \\
1    & 0     & 0     & 1     & 1                        & 0 & 0 \\ \hline
0    & 0     & 0     & 1     & 0                        & 1 & 1 \\
1    & 1     & 0     & 1     & 0                        & 1 & 0 \\
0    & 1     & 1     & 1     & 1                        & 0 & 1 \\
1    & 0     & 1     & 1     & 0                        & 1 & 1 \\
0    & 0     & 1     & 0     & 0                        & 0 & 0 \\
1    & 1     & 1     & 0     & 1                        & 1 & 0 \\
0    & 1     & 0     & 0     & 0                        & 0 & 1 \\
1    & 0     & 0     & 0     & 0                        & 0 & 0 \\ \hline
\end{tabular}
\end{table}

The balancedness of the bent functions can be observed in the table. Furthermore, as indicated by Algorithm 2, if $a_1=0$ and $a_4=1$, then balancedness is achieved on words of odd Hamming weight, independently of the values of $a_2$ and $a_3$.   
\end{example}

\section{Conclusions}
\label{Conclusions}

In this work, we have presented a study of the cryptographic properties of bent functions under affine restrictions. A primary contribution of this research is the development of an alternative proof for both the balancedness and the bent property of these functions when their domain is restricted to specific affine spaces, such as those defined by even or odd Hamming weights. 

Furthermore, we have introduced original results by applying the secondary Maiorana-McFarland construction. Our analysis in Theorem \ref{balancedextended} reveals an important characteristic: the resulting bent functions exhibit balancedness exclusively within the defined affine subspace $H_{\bar{\beta}}$. This precise localisation of the balancedness property is significant for understanding how secondary constructions distribute their weight across high-dimensional domains.

This study also conducted an analysis of these constructions and the addition of linear functions to them. Our findings are partially reflected in two algorithms derived from our theoretical framework; these algorithms provide a systematic method for constructing bent functions with strictly controlled balancedness properties.

As suggested in \cite{MaitraBalance}, starting from bent functions remains a viable approach to obtaining Boolean functions with high non-linearity and total balancedness. Since our current approach successfully preserves balancedness over a precisely identified half of the domain, future research will focus on extending this property to the entire domain while minimising the loss of non-linearity, aiming to bridge the gap between bent and highly non-linear balanced functions.

\begin{acknowledgement}
J.C. Ku-Cauich, S. Mandujano-Velázquez, and V. Bautista-Ancona wish to honour the memory of Dr. Javier Díaz Vargas, whose significant contributions to this research were fundamental to its development.
\end{acknowledgement}

\bibliographystyle{spmpsci}

% Create the reference section using BibTeX:
\bibliography{Bent functions construction using extended Maiorana}

\end{document}